\newcommand{\bbT}{{\mathbb T}}
\newcommand{\ZZ}{{\mathbb Z}}
\newcommand{\frakG}{{\mathfrak G}}
\newcommand{\frakS}{{\mathfrak S}}
\newcommand{\calF}{{\mathcal F}}
\newcommand{\scA}{{\mathscr A}}
\newcommand{\FSVT}{{SVT}}
\newtheorem{thm}{Theorem}[section]  
\newtheorem{lem}[thm]{Lemma}  
\newtheorem{prop}[thm]{Proposition} 
\newtheorem{df-pr}[thm]{Definition-Proposition}
\theoremstyle{definition}
\newtheorem{exm}[thm]{Example}
\numberwithin{equation}{section} 
\begin{document} 
\title{A tableau formula of double Grothendieck polynomials for $321$-avoiding permutations}
\author{Tomoo Matsumura}

\pagestyle{plain}
\lhead{T. Matsumura}
\rhead{}

\date{\today}
\maketitle 
\begin{abstract}
In this article, we prove a tableau formula for the double Grothendieck polynomials associated to $321$-avoiding permutations. The proof is based on the compatibility of the formula with the $K$-theoretic divided difference operators.
\end{abstract}
\section{Introduction}

Let $S_n$ be the permutation group of $\{1,\dots, n\}$ and a permutation $w\in S_n$ is called {\it $321$-avoiding} if there are no numbers $i<j<k$ such that $w(i)>w(j)>w(k)$. The Grassmannian permutations are also examples of such permutations. The goal of this paper is to prove a tableau formula of the {\it double Grothendieck polynomials} $G_w(x,b)$ associated to those $321$-avoiding permutations $w$. 

Lascoux and Sch{\"u}tzenberger (\cite{LascouxSchutzenberger3}, \cite{Lascoux1}) introduced the (double) Grothendieck polynomials which are polynomial representatives of the (equivariant) $K$-theory classes of structure sheaves of Schubert varieties in a full flag variety. Fomin and Kirillov (\cite{GrothendieckFomin}, \cite{DoubleGrothendieckFomin}) further studied them in the terms of $\beta$-Grothendieck polynomials and Yang-Baxter equations, obtaining their combinatorial formula. For the case of {\it Grassmannian permutations} , or more generally {\it vexillary permutations} (i.e. $2143$-avoiding), the corresponding double Grothendieck polynomials are expressed in terms of set-valued tableaux by the work of Buch \cite{BuchLRrule}, McNamara \cite{McNamara}, and Knutson--Miller--Yong \cite{KnutsonMillerYong}. On the other hand, the author jointly with Hudson, Ikeda and Naruse obtained a determinant formula of double Grothendieck polynomials for Grassmannian permutations (\cite{HIMN}, see \cite{Anderson2016} and \cite{HudsonMatsumura2} for vexillary case), in the context of degeneracy loci formulas in $K$-theory. Recently, Anderson--Chen--Tarasca \cite{AndersonChenTarasca} extended the method in \cite{HIMN} to the case of $321$-avoiding permutations in the study of Brill-Noether loci in $K$-theory, and obtained a determinant formula of the corresponding double Grothendieck polynomials. Combined with the work \cite{MatsumuraFlagged} by the author on skew Grothendieck polynomials, it implies a tableaux formula of (single) Grothendieck polynomials associated to 321 avoiding permutations. See also the most recent related work \cite{ChanPflueger} by Chan--Pflueger. This new development motivated our work in this paper.

Let $w\in S_n$ be a $321$-avoiding permutation. Following the work of Billey--Jockusch--Stanley \cite{BilleyJockuschStanley} (cf. Chen--Yan--Yang \cite{ChenYanYang} and Anderson--Chen--Tarasca \cite{AndersonChenTarasca}), one can find a skew partition $\sigma(w)=\lambda/\mu$ and a flagging $f(w)$ which is a subsequence of $(1,\dots, n)$. Namely, let $f(w)=(f_1,\dots, f_r)$ be the increasing sequence of numbers $i$ such that $w(i)>i$. We define the partitions $\lambda$ and $\mu$ by
\[
\lambda_i=w(f_r) - r -f_i+i, \ \ \ \mu_i=w(f_r) - r - w(f_i)+i, \ \ \ \ \ i=1,\dots,r.
\]
As mentioned above, it is known that the single Grothendieck polynomials $\frakG_w(x)$ has the following tableau formula
\[
\frakG_w(x) = \sum_{T \in \FSVT(\sigma(w),f(w))} \beta^{|T|-|\sigma(w)|} \prod_{e\in T}x_{val(e)}, 
\]
where $\FSVT(\sigma(w),f(w))$ is the set of set-valued tableaux of shape $\sigma(w)$ with flagging $f(w)$. On the other hand, Chen--Yan--Yang \cite{ChenYanYang} showed that the double Schubert polynomials $\frakS_w(x,b)$ of Lascoux and Sch{\"u}tzenberger associated to $w$ has the formula
\[
\frakS_w(x,b) = \sum_{T}\prod_{e\in T} (x_{val(e)}+ b_{\lambda_{r(e)}+f_{r(e)} - c(e) - val(e) + 1}),
\]
where $T$ runs over the set of all semistandard tableaux of shape $\sigma(w)$ with flagging $f(w)$. Note that both fomulas specialize to the formula of the single Schubert polynomial $\frakS_w(x)$ found earlier by Billey--Jockusch--Stanley \cite{BilleyJockuschStanley}.

Our main theorem (Theorem \ref{thmmain}) unifies the two formulas above. We show that the double Grothendieck polynomial $\frakG_w(x,b)$ associated to $w$ is given by
\[
\frakG_w(x,b) = \sum_{T \in \FSVT(\sigma(w),f(w))} \beta^{|T|-|\sigma(w)|}\prod_{e\in T}(x_{val(e)}\oplus b_{\lambda_{r(e)}+f_{r(e)} - c(e) - val(e) + 1}),
\]
where $u\oplus v:=u+v + \beta uv$ for any variables $u$ and $v$.

The main ingredient for the proof is Proposition \ref{propmain}, the compatibility of the tableau formula with the $K$-theoretic divided difference operators which are used to define Grothendieck polynomials. In proving this compatibility, we closely follow the argument used by Wachs in \cite[Lemma 1.1]{Wachs}. 

The paper is organized as follows. In $\S$ \ref{S1}, we recall the definitions of the divided difference operators and the double Grothendieck polynomials, as well as a few basic formulas. In $\S$ \ref{S2}, we recall the definitions of set-valued tableaux of skew shape with flagging and describe our main theorem (Theorem \ref{thmmain}). Grassmannian permutations are basic examples of $321$-avoiding permutations and we also prove the theorem in this case (Lemma \ref{thmGr}). In $\S$ \ref{S3}, we show a key proposition and then prove our main theorem by induction.

\section{Divided difference and double Grothendieck polynomials}\label{S1}
Let $x=(x_1,x_2,\cdots,x_n)$ and $b=(b_1,b_2,\dots,b_n)$ be two sets of variables. Let $\ZZ[\beta][x,b]$ be the polynomial ring in variable $x$ and $b$ over the coefficient ring $\ZZ[\beta]$ where $\beta$ is a formal variable of degree $-1$. Let $S_n$ be the permutation group of $\{1,\dots, n\}$ and $s_i=(i,i+1), \ i=1,\dots, n-1$, denote the transpositions that generates $S_n$. The length of a permutation $w$, denoted by $\ell(w)$, is the number of transpositions in the reduced word decomposition of $w$. Let the action of $S_n$ on $\ZZ[\beta][x,b]$ be defined by
\[
\sigma(f(x_1,\dots, x_n)) := f(x_{\sigma(1)},\dots, x_{\sigma(n)}).
\]
For each $i \in \ZZ_{>0}$, we define the divided difference operator $\pi_i$ on $\ZZ[\beta][x,b]$ by
\[
\pi_i(f) = \frac{(1+\beta x_{i+1}) f - (1+\beta x_i ) s_if }{x_i-x_{i+1}}
\]
for each polynomial $f \in \ZZ[\beta][x,b]$.

The {\it double Grothendieck polynomial} $\frakG_w=\frakG_w(x,b) \in \ZZ[\beta][x,b]$ associated to a permutation $w\in S_n$ is defined inductively as follows. Our convention coincides with the one in \cite{BuchLRrule} after setting $\beta=-1$. For any variable $u$ and $v$, we use the notation $u\oplus v:=u+v+\beta uv$. For the longest element $w_0$ in $S_n$, we set
\[
\frakG_{w_0} = \prod_{i+j\leq n}  (x_i\oplus b_j).
\]
If $w$ is not the longest, we can find a positive integer $i$ such that $\ell(ws_i)=\ell(w) +1$ and then define
\[
\frakG_w:= \pi_i(\frakG_{ws_i}).
\]
This definition is independent of the choice of $s_i$ because the operators $\pi_i$ satisfy the Coxeter relations. 

To conclude this section, we recall a few basic formulas for $\pi_i$  (cf.  \cite[\S2.1]{MatsumuraFlagged}). Let $f, g\in \ZZ[\beta][[x,b]]$. First, we have the following Leibniz rule for $\pi_i$: 
\begin{equation}\label{ddo1}
\pi_i(fg) = \pi_i(f)g + s_i(f)\pi_i(g) + \beta s_i(f)g.
\end{equation}
If $f$ is symmetric in $x_i$ and $x_{i+1}$, then we have
\begin{eqnarray}
\pi_i(f) &=& -\beta f,  \label{ddo2}\\
\pi_i(fg) &=&f\pi_i(g), \label{ddo3}
\end{eqnarray}
and moreover we have
\begin{equation}\label{x^n}
\pi_i(x_i^k f) = 
\begin{cases}
-\beta f& (k=0), \\
\left(\displaystyle\sum_{s=0}^{k-1} x_i^s x_{i+1}^{k-1-s}  + \beta \displaystyle\sum_{s=1}^{k-1} x_i^{s} x_{i+1}^{k-s}\right)f & (k>0).
\end{cases}
\end{equation}

\section{Flagged skew partitions and $321$-avoiding permutations}
\subsection{Definitions and the main theorem}\label{S2}
Let $\lambda=(\lambda_1\geq \cdots \geq \lambda_r>0)$ and $\mu=(\mu_1\geq \cdots \geq \mu_r\geq 0)$ be partitions such that $\mu_i\leq \lambda_i$ for all $i=1,\dots,r$. We identify a partition with its Young diagram. A {\it skew partition} $\lambda/\mu$ is given by the pair of $\lambda$ and $\mu$. We identify a skew partition $\lambda/\mu$ with its {\it skew diagram} which is the collection of boxes in $\lambda$ that are not in $\mu$. More precisely, we denote $\lambda/\mu=\{(i,j) \ |\ \mu_i < j \leq \lambda_i, k=1,\dots, r \}$. When $\mu$ is empty, the skew partition $\lambda/\mu$ is regarded as the partition $\lambda$. Let $|\lambda/\mu|$ be the numbers of boxes in the corresponding skew diagram. 

For finite subsets $a$ and $b$ of positive integers, we define $a<b$ if $max(a)<min(b)$, and $a\leq b$ if $max(a)\leq min(b)$. A {\it set-valued tableau} $T$ of shape $\lambda/\mu$ is a labeling by which each box of the skew diagram $\lambda/\mu$ is assigned a finite subset of positive integers, called a {\it filling}, in such a way that the rows are weakly increasing from left to right and the columns strictly increasing from top to bottom. An element $e$ of a filling of $T$ is called an {\it entry} and denoted by $e \in T$. The numerical value of an entry $e \in T$ is denoted by $val(e)$, and the row and column indices of the box to which $e$ belongs are denoted by $r(e)$ and $c(e)$ respectively. Let $|T|$ be the total number of entries of $T$.

A {\it flagging} of $\lambda/\mu$ is a sequence of positive integers $f=(f_1,\dots, f_r)$ such that $f_1\leq \cdots \leq f_r$. A \emph{set-valued tableau of skew shape $\lambda/\mu$ with a flagging $f$} is a set-valued tableau of skew shape $\lambda/\mu$ such that each filling in the $i$-th row is a subset of $\{1,\dots,f_i\}$ for all $i$. Let $\FSVT(\lambda/\mu,f)$ denote the set of all skew tableaux of shape $\lambda/\mu$ with a flagging $f$. If $f_1=\cdots=f_r$, then the associated set-valued tableaux are nothing but the set-valued tableaux of skew shape $\lambda/\mu$ considered by Buch in \cite{BuchLRrule}. Note that in \cite{Wachs} and \cite{MatsumuraFlagged}, one considers more general flagged skew partitions and their set-valued tableaux.

A permutation $w \in S_n$ is called {\it $321$-avoiding} if there are no numbers $i<j<k$ such that $w(i)>w(j)>w(k)$. Such permutation $w$ is completely characterized by a pair of increasing subsequence of $(1,\dots, n)$ (cf. \cite[$\S$2]{ErikssonLinusson}). Namely, let $f(w)=(f_1,\dots, f_r)$ be the increasing sequence of indices $i$ such that $i<w(i)$ and then $h(w):=(w(f_1),\dots, w(f_r))$ is also an increasing sequence. If $f^c(w)=(f_1^c,\dots, f_{n-r}^c)$ is the increasing sequence of indices $i$ such that $i\geq w(i)$, then $h^c(w)=(w(f_1^c),\dots w(f_{n-r}^c))$ is also an increasing sequence. Therefore one can see that $f(w)$ and $h(w)$ determines $w$ uniquely.

To $w$ one assigns a skew partition $\sigma'(w)=\lambda'/\mu'$ used by Anderson--Chen--Tarasca \cite{AndersonChenTarasca}: 
\begin{eqnarray*}
\lambda_i':=
w(f_{r+1-i}) - (r+1-i),\ \ \ 
\mu_i':=f_{r+1-i}- (r+1 - i).
\end{eqnarray*}
The skew partition $\sigma(w)=\lambda/\mu$ defined by Billey--Jockusch--Stanley \cite{BilleyJockuschStanley} can be obtained from rotating $\sigma'(w)$ by $180$ degree  and it relates to $\sigma'(w)$ by
\begin{eqnarray}
\lambda_i&:=& 
\lambda_1' - \mu_{r+1-i}'= 
w(f_r)-r - (f_i-i),\label{lamdef}\\ 
\mu_i &:=& 
\lambda_1' - \lambda_{r+1-i}' = 
w(f_r)-r - (w(f_i)-i).\label{mudef}
\end{eqnarray}
We have $\ell(w)=|\sigma(w)|$. Note also that we have $\lambda_1' = w(f_r)-r=\lambda_i + f_i -i$ for all $i=1,\dots, r$. 

The following is the main theorem of this article.
\begin{thm}\label{thmmain}
Let $w\in S_n$ be a $321$-avoiding permutation and $\sigma(w)$ its associated skew partition with the flagging $f(w)$. Then we have
\begin{equation}\label{eqmain}
\frakG_{w}(x,b) = \sum_{T\in \FSVT(\sigma(w),f(w))} \beta^{|T|-|\sigma(w)|} \prod_{e\in T} x_{val(e)} \oplus b_{\lambda_{r(e)} + f_{r(e)} - c(e) - val(e) +1}.
\end{equation}
\end{thm}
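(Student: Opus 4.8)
The plan is to prove Theorem \ref{thmmain} by induction on the codimension-type statistic, following the strategy sketched in the introduction: verify a base case on Grassmannian permutations, and then show that the right-hand side of \eqref{eqmain} is compatible with the $K$-theoretic divided difference operators $\pi_i$, so that it transforms exactly the way $\frakG_w$ does under $w \mapsto w s_i$ when $\ell(ws_i) = \ell(w)-1$. Since $\frakG_w = \pi_i(\frakG_{ws_i})$ is the defining recursion and every $321$-avoiding $w$ can be reached from a Grassmannian permutation by a chain of such steps staying inside the $321$-avoiding class (indeed, Grassmannian permutations are the $w$ with a single descent, and one can always increase the number of descents by multiplying by a suitable $s_i$ while remaining $321$-avoiding, reaching $w_0$'s coset or at least covering all $321$-avoiding $w$), this will suffice once the base case and the compatibility are in hand.

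First I would establish the base case, which is Lemma \ref{thmGr}: for a Grassmannian permutation $w$ with descent at position $k$, the skew partition $\sigma(w)$ is an ordinary partition $\lambda$ (so $\mu$ is empty) and the flagging is the constant sequence $(k,\dots,k)$; the set $\FSVT(\lambda, (k,\dots,k))$ is exactly the set of set-valued tableaux of Buch with entries bounded by $k$, and the index $\lambda_{r(e)} + f_{r(e)} - c(e) - val(e) + 1$ collapses so that the $b$-argument matches the known factorial/double Grothendieck formula of Buch--McNamara--Knutson--Miller--Yong. I would cite or rederive this from the determinantal or tableau formula already in the literature (\cite{BuchLRrule}, \cite{McNamara}, \cite{KnutsonMillerYong}, \cite{HIMN}), checking only that the normalization of $\beta$ and the $\oplus$ conventions agree with those fixed in \S\ref{S1}.

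The core of the argument is Proposition \ref{propmain}: if $w$ is $321$-avoiding, $ws_i$ is $321$-avoiding, and $\ell(w) = \ell(ws_i) - 1$, then applying $\pi_i$ to the tableau sum for $ws_i$ produces the tableau sum for $w$. To prove this I would set $R_w := \sum_{T \in \FSVT(\sigma(w),f(w))} \beta^{|T|-|\sigma(w)|} \prod_{e\in T} (x_{val(e)} \oplus b_{\dots})$ and analyze how $\sigma(w)$, $f(w)$ change when $w \mapsto ws_i$ decreases length: multiplying on the right by $s_i$ swaps the values $w(i)$ and $w(i+1)$, which in the Billey--Jockusch--Stanley diagram either removes a single box from the skew shape (changing a row length or $\mu$ entry by one) or alters the flagging by one in a single row — the precise bookkeeping comes from formulas \eqref{lamdef}, \eqref{mudef} and the characterization of $f(w)$. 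Then, mimicking Wachs \cite[Lemma 1.1]{Wachs}, I would split $R_{ws_i}$ according to the content of the boxes in the relevant row(s), factor each term as (a part symmetric in $x_i, x_{i+1}$) times (a power of $x_i$ contributed by entries with value $i$), and apply the Leibniz rule \eqref{ddo1} together with the symmetric-function identities \eqref{ddo2}, \eqref{ddo3}, and especially \eqref{x^n} to compute $\pi_i(R_{ws_i})$ termwise; the entries equal to $i$ or $i+1$ in the flag-boundary boxes are exactly what the operator $\pi_i$ acts on, and \eqref{x^n} is tailored to convert a monomial $x_i^k$ into the set-valued sum over how to distribute $i$'s and $i+1$'s, which is precisely the combinatorial effect of passing between $\FSVT(\sigma(ws_i),f(ws_i))$ and $\FSVT(\sigma(w),f(w))$. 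The $b$-arguments $b_{\lambda_{r(e)} + f_{r(e)} - c(e) - val(e)+1}$ are constants under $\pi_i$ except insofar as $val(e)$ changes when an entry flips between $i$ and $i+1$, and one checks that the shift in $val(e)$ is exactly compensated by the shift in the row length or flag; this matching is what forces the particular index appearing in \eqref{eqmain}.

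The main obstacle will be the last point: keeping the bijective/combinatorial accounting in Proposition \ref{propmain} exactly aligned with the algebraic output of \eqref{x^n}, particularly tracking the $b$-subscripts through the flip $i \leftrightarrow i+1$ and through the addition or deletion of a box, and handling the boundary cases (an entry $\{i, i+1\}$ in a single box, a box that is the unique flag-boundary box in its row, the degenerate $k=0$ branch of \eqref{x^n} corresponding to a box with no $i$-entry). I expect the Grassmannian base case and the reduction to the $321$-avoiding class to be comparatively routine, while the faithful transcription of Wachs' identity to the skew, double, set-valued setting — verifying that no spurious terms survive and that every $\FSVT(\sigma(w),f(w))$ tableau is hit with the correct $\beta$-power — is where the real work lies.
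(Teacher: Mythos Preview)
Your overall strategy --- Grassmannian base case plus $\pi_i$-compatibility of the tableau sum, following Wachs --- is exactly the paper's. However, two of your steps would not go through as written.

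First, your description of the base case is incorrect. For a Grassmannian $w$ with descent at $d$, the shape $\sigma(w)$ defined by \eqref{lamdef}--\eqref{mudef} is a genuine \emph{skew} shape (e.g.\ for $w=13524$ one finds $\sigma(w)=(2,2)/(1,0)$), and the flagging $f(w)=(s+1,\dots,d)$ is \emph{not} constant. So you cannot simply quote the Buch--McNamara formula. The paper's Lemma~\ref{thmGr} bridges this gap via a $180^\circ$-rotation bijection $\FSVT(\sigma(w),f(w))\to\FSVT(\bar\lambda(w),(d,\dots,d))$ onto the straight-shape, constant-flag setting where \eqref{eqGr} applies, and then invokes the symmetry of $\frakG_w$ in $x_1,\dots,x_d$ to absorb the reindexing $val(e)\mapsto d+1-val(e)$. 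This step is not hard, but it is not automatic either.

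Second, and more seriously, your plan for Proposition~\ref{propmain} --- factor each tableau monomial as (symmetric part)$\times$(power of $x_i$) and invoke \eqref{x^n} --- only works in the single-variable case. In the double setting the entries with value $t$ in the relevant row contribute $\prod_v (x_t\oplus b_{\ell_v})$, which is \emph{not} $x_t^k$ times something symmetric: the $b$'s are multiplicatively entangled with $x_t$ through $\oplus$, so \eqref{x^n} does not apply. The paper replaces \eqref{x^n} by Lemma~\ref{lemkey}, which computes $\pi_t\big(\prod_{v=1}^k(x_t\oplus b_{\ell_v})\big)$ explicitly and shows the result is symmetric in $x_t,x_{t+1}$; this is the correct double analogue of \eqref{x^n}, and it is what makes the Wachs-style equivalence-class decomposition (the rectangles $A_s,B_s$ of Figure~1) go through. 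Your remark that the $b$-subscripts shift with $val(e)$ and must be ``compensated'' is pointing at the right issue, but the mechanism that resolves it is this lemma, not a separate index-chase layered on top of \eqref{x^n}.
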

\begin{exm}
Let $w=(31254)$. Then $f(w)=(1,4)$, $h(w)=(3,5)$, $f^c(w)=(2,3,5)$ and $h^c(w)=(1,2,4)$. We find that 
$\lambda'=
(3,2)$, $\mu'=
(2,0)$, $\lambda=
(3,1) 
$ and $\mu=
(1,0)
$. Below is a few examples of set-valued tableaux in $\FSVT(\sigma(w),f(w))$.
\setlength{\unitlength}{0.4mm}
\begin{center}
\begin{picture}(30,25)
\put(10,20){\line(1,0){20}}
\put(0,10){\line(1,0){30}}
\put(0,0){\line(1,0){10}}
\put(0,0){\line(0,1){10}}
\put(10,0){\line(0,1){20}}
\put(20,10){\line(0,1){10}}
\put(30,10){\line(0,1){10}}

\put(3,3){{\tiny  $1$}}
\put(13,13){{\tiny  $1$}}
\put(23,13){{\tiny  $1$}}
\end{picture}
\begin{picture}(30,25)
\put(10,20){\line(1,0){20}}
\put(0,10){\line(1,0){30}}
\put(0,0){\line(1,0){10}}
\put(0,0){\line(0,1){10}}
\put(10,0){\line(0,1){20}}
\put(20,10){\line(0,1){10}}
\put(30,10){\line(0,1){10}}

\put(3,3){{\tiny  $2$}}
\put(13,13){{\tiny  $1$}}
\put(23,13){{\tiny  $1$}}
\end{picture}
\begin{picture}(30,25)
\put(10,20){\line(1,0){20}}
\put(0,10){\line(1,0){30}}
\put(0,0){\line(1,0){10}}
\put(0,0){\line(0,1){10}}
\put(10,0){\line(0,1){20}}
\put(20,10){\line(0,1){10}}
\put(30,10){\line(0,1){10}}

\put(3,3){{\tiny  $3$}}
\put(13,13){{\tiny  $1$}}
\put(23,13){{\tiny  $1$}}
\end{picture}
\begin{picture}(30,25)
\put(10,20){\line(1,0){20}}
\put(0,10){\line(1,0){30}}
\put(0,0){\line(1,0){10}}
\put(0,0){\line(0,1){10}}
\put(10,0){\line(0,1){20}}
\put(20,10){\line(0,1){10}}
\put(30,10){\line(0,1){10}}

\put(3,3){{\tiny  $4$}}
\put(13,13){{\tiny  $1$}}
\put(23,13){{\tiny  $1$}}
\end{picture}
\begin{picture}(30,25)
\put(10,20){\line(1,0){20}}
\put(0,10){\line(1,0){30}}
\put(0,0){\line(1,0){10}}
\put(0,0){\line(0,1){10}}
\put(10,0){\line(0,1){20}}
\put(20,10){\line(0,1){10}}
\put(30,10){\line(0,1){10}}

\put(2,3){{\tiny  $12$}}
\put(13,13){{\tiny  $1$}}
\put(23,13){{\tiny  $1$}}
\end{picture}
\begin{picture}(30,25)
\put(10,20){\line(1,0){20}}
\put(0,10){\line(1,0){30}}
\put(0,0){\line(1,0){10}}
\put(0,0){\line(0,1){10}}
\put(10,0){\line(0,1){20}}
\put(20,10){\line(0,1){10}}
\put(30,10){\line(0,1){10}}

\put(2,3){{\tiny  $13$}}
\put(13,13){{\tiny  $1$}}
\put(23,13){{\tiny  $1$}}
\end{picture}
\begin{picture}(30,25)
\put(10,20){\line(1,0){20}}
\put(0,10){\line(1,0){30}}
\put(0,0){\line(1,0){10}}
\put(0,0){\line(0,1){10}}
\put(10,0){\line(0,1){20}}
\put(20,10){\line(0,1){10}}
\put(30,10){\line(0,1){10}}

\put(2,3){{\tiny  $14$}}
\put(13,13){{\tiny  $1$}}
\put(23,13){{\tiny  $1$}}
\end{picture}
\begin{picture}(30,25)
\put(10,20){\line(1,0){20}}
\put(0,10){\line(1,0){30}}
\put(0,0){\line(1,0){10}}
\put(0,0){\line(0,1){10}}
\put(10,0){\line(0,1){20}}
\put(20,10){\line(0,1){10}}
\put(30,10){\line(0,1){10}}

\put(0.3,3){{\tiny  $123$}}
\put(13,13){{\tiny  $1$}}
\put(23,13){{\tiny  $1$}}
\end{picture}
\begin{picture}(30,25)
\put(10,7){$\cdots$}
\end{picture}
\end{center}
If $T$ is the 5th one above, the corresponding term in the summation of (\ref{eqmain}) is
\[
\beta (x_1 \oplus b_{2} ) (x_1\oplus b_{1} )(x_1\oplus b_{4} )(x_2\oplus b_{3}).
\]
\end{exm}
\subsection{Grassmannian case}
In this section, we prove Theorem \ref{thmmain} in the Grassmannian case. A permutation $w\in S_n$ is called {\it Grassmannian with descent at $d$} if there is at most one descent at $d$, {\it i.e.} $w(1)<\cdots < w(d)$ and $w(d+1)<\cdots<w(n)$. By definition, a Grassmannian permutation is $321$-avoiding. In this case, $f(w)=(s+1,\dots, s+r)$ where $s+1$ is the smallest index $i$ such that $i<w(i)$ and $f_r=s+r=d$. Hence, for each $i=1,\dots, r$, we find that $\lambda'_i=w(d+1-i)-(r+1-i)$ and $\mu'_i=s$ so that $\bar\lambda(w):=\lambda'-\mu'$ is a partition. We can also find that $\lambda_i = w(d)-d$ and $\mu_i=w(d)-r - (w(s+i)-i)$. 
\begin{exm}
Consider $w=(13524)$ which is a Grassmannian permutation with descent at $d=3$. Then we have $f(w)=(2,3)$, $h(w)=(3,5)$, $\lambda'=
(3,2)
$,
$\mu'=(1,1)$, $\lambda=(2,2)$, $\mu
= (1,0)$, $\bar\lambda(w)=(2,1)$ and $\bar f(w)=(3,3)$.
\end{exm}
The double Grothendieck polynomial associated to a Grassmannian permutation $w$ is known to be a symmetric polynomial in $x_1,\dots, x_d$ with coefficients in $\ZZ[\beta][b]$ and it can be expressed by the following formula (see \cite{BuchLRrule} and \cite{McNamara}): let $\bar f =(d,\dots, d)$, and we have
\begin{equation}\label{eqGr}
\frakG_w(x,b) = \sum_{T \in \FSVT(\bar\lambda(w), \bar f)} \beta^{|T| - |\bar\lambda(w)|} \prod_{e\in T} (x_{val(e)}\oplus b_{val(e) + c(e)-r(e)}).
\end{equation}
\begin{lem}\label{thmGr}
If $w\in S_n$ is Grassmannian with descent at $d$, the equation (\ref{eqmain}) holds.
\end{lem}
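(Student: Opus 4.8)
The plan is to reconcile the two tableau formulas \eqref{eqmain} and \eqref{eqGr} by a change of indexing, since both are sums over $\FSVT$ of the same skew shape (up to $180^\circ$ rotation) with constant flagging. First I would make explicit the dictionary between $\sigma(w)=\lambda/\mu$ and $\bar\lambda(w)=\lambda'-\mu'$: for a Grassmannian $w$ with descent at $d$ we have $f(w)=(s+1,\dots,s+r)$ with $s+r=d$, and by \eqref{lamdef}--\eqref{mudef} one gets $\lambda_i = w(d)-d$ for all $i$ (so $\lambda$ is a rectangle of width $w(d)-d$ and height $r$), while $\sigma'(w)=\lambda'/\mu'$ has $\mu'_i = s$ constant, so $\bar\lambda(w)=\lambda'-\mu'$ is an honest partition with $\bar\lambda(w)_i = \lambda'_i - s = w(d+1-i)-(r+1-i)-s$. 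The $180^\circ$ rotation sending $\sigma'(w)$ to $\sigma(w)$ identifies the box $(i,j)$ of $\bar\lambda(w)$ (equivalently the box $(i,j+s)$ of $\lambda'$) with a box $(i^\ast, j^\ast)$ of $\sigma(w)$; I would record the precise substitution $i^\ast = r+1-i$, $j^\ast = (w(d)-d) - (j-1) - ?$ and track the column/row indices carefully. Under this bijection of boxes, an entry $e$ corresponds to an entry $e^\ast$ with $val(e^\ast) = val(e)$, and the $\beta$-exponent $|T|-|\sigma(w)| = |T|-|\bar\lambda(w)|$ is manifestly preserved since $|\sigma(w)|=|\bar\lambda(w)|=\ell(w)$.

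The crux is then to check that the $b$-subscript matches: I must verify that for corresponding boxes,
\[
\lambda_{r(e)} + f_{r(e)} - c(e) - val(e) + 1 \;=\; val(e) + c(e^\ast) - r(e^\ast)
\]
wait—more precisely that the subscript appearing in \eqref{eqmain}, namely $\lambda_{r(e)}+f_{r(e)}-c(e)-val(e)+1$, equals the subscript in \eqref{eqGr}, namely $val(e^\ast)+c(e^\ast)-r(e^\ast)$, after the rotation identifies $e$ with $e^\ast$. Since $val(e)=val(e^\ast)$, this reduces to a purely combinatorial identity among $\lambda_{r(e)}, f_{r(e)}, c(e)$ on one side and $c(e^\ast), r(e^\ast)$ on the other, using $\lambda_i + f_i - i = \lambda_1' = w(f_r)-r = w(d)-d+ \text{(constant)}$ (the identity flagged at the end of $\S$\ref{S2}) together with the rotation formulas. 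I would substitute $r(e)=r+1-r(e^\ast)$ and the column relation into the left side and simplify; the constant $s$ should cancel, leaving the right side. This is where a sign or off-by-one error is most likely to creep in, so I would double-check it on the running example $w=(13524)$: there $d=3$, $w(d)=5$, $\lambda=(2,2)$, $f(w)=(2,3)$, $\bar\lambda(w)=(2,1)$, $\bar f=(3,3)$, and one can tabulate both subscripts box-by-box and confirm agreement.

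Once the indexing dictionary is established, the proof is essentially a one-line invocation: the rotation $T\mapsto T^\ast$ is a bijection $\FSVT(\sigma(w),f(w)) \to \FSVT(\bar\lambda(w),\bar f)$ (the flagging condition transports correctly because $f(w)$ and $\bar f$ are both eventually constant equal to $d$, and a box in row $i$ of $\sigma(w)$ is allowed labels $\le f_i \le d$, matching row $r+1-i$ of $\bar\lambda(w)$ allowed labels $\le d$—here I should check the flagging constraint is actually no constraint in both cases, or adjust), and under it the summand of \eqref{eqmain} equals the summand of \eqref{eqGr} termwise. Hence \eqref{eqmain} follows from \eqref{eqGr}, which is the known formula of Buch and McNamara. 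The main obstacle I anticipate is purely bookkeeping: getting the $180^\circ$ rotation's effect on $(r(e),c(e))$ exactly right and pushing it through the $b$-subscript without an off-by-one slip; the flagging, the $\beta$-exponent, and the $val$ are all transparently preserved.
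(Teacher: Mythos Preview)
Your overall strategy --- rotate $\sigma(w)$ to $\bar\lambda(w)$ and compare with \eqref{eqGr} --- is exactly what the paper does, but your execution has a genuine gap: the $180^\circ$ rotation alone is \emph{not} a bijection of set-valued tableaux, and your claim that $val(e^\ast)=val(e)$ is false. If $T$ is semistandard and you merely place the same fillings in the rotated boxes, rows become weakly decreasing and columns strictly decreasing, so $T^\ast$ is not semistandard. The correct bijection (the one the paper uses) is: rotate \emph{and} replace each entry value $v$ by $d{+}1{-}v$. This also explains why your ``flagging transports correctly'' remark is off: $f(w)=(s{+}1,\dots,s{+}r)$ is not constant, and it is precisely the value complement $v\mapsto d{+}1{-}v$ that turns the condition ``row $i$ of $\sigma(w)$ has values $\le s{+}i$'' into the condition ``row $r{+}1{-}i$ of $\bar\lambda(w)$ has values $\ge r{+}1{-}i$,'' which is automatic by column-strictness of a straight-shape tableau.

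Your own $b$-subscript check would have revealed this. In the example $w=(13524)$ one has $\lambda_{r(e)}+f_{r(e)}-c(e)-val(e)+1 = 4+r(e)-c(e)-val(e)$ on the \eqref{eqmain} side, and $val(e^\ast)+c(e^\ast)-r(e^\ast)=val(e^\ast)+r(e)-c(e)$ on the \eqref{eqGr} side after rotation; these agree iff $val(e^\ast)=4-val(e)=d{+}1-val(e)$, not $val(e^\ast)=val(e)$. Once you complement values, a new issue appears on the $x$-side: the monomial becomes $x_{d+1-val(e')}$ rather than $x_{val(e')}$. The paper closes this gap by invoking the known fact that $\frakG_w(x,b)$ is symmetric in $x_1,\dots,x_d$ for Grassmannian $w$, so the global substitution $x_j\mapsto x_{d+1-j}$ is harmless. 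You did not mention this symmetry step, and without it the argument does not conclude.
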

\begin{proof}
First observe that there is a bijection 
\[
\FSVT(\sigma(w),f(w)) \to \FSVT(\bar \lambda(w), \bar f)
\]
sending $T$ to $T'$ which is obtained by changing the value $i$ of each entry in $T$ to $d+1-i$ and rotating it by $180$ degree. The inverse map can be defined by the same way. Although $T$ is a tableau with flagging $(1,2,\dots, r)$ and the numbers used in the $i$-th row of the tableau $T'$ seems bounded below by $i$,  this would not affect the bijection because of the column strictness of tableaux of the partition $\bar \lambda(w)$. 

If $e\in T$ corresponds to $e'\in T'$ under this bijection, we have
\[
val(e)= d+1-val(e'), \ \ \  r(e) = r+1-r(e'), \ \ \ c(e) = w(d)-d+1-c(e').
\]
Thus, we have
\begin{eqnarray*}
&&\sum_{T\in \FSVT(\sigma(w),f(w))} \beta^{|T|-|\sigma(w)|} \prod_{e\in T} x_{val(e)} \oplus b_{\lambda_{r(e)} + f_{r(e)} - c(e) - val(e) +1}\\
&=&\sum_{T\in \FSVT(\sigma(w),f(w))} \beta^{|T|-|\sigma(w)|} \prod_{e\in T} x_{val(e)} \oplus b_{w(d)-r +r(e) - c(e) - val(e) +1}\\
&=&\sum_{T'\in \FSVT(\bar \lambda(w), \bar f)} \beta^{|T'|-|\bar \lambda(w)|} \prod_{e'\in T'} x_{d+1-val(e')} \oplus b_{val(e')-r(e')+c(e')}\\
&=&\sum_{T'\in \FSVT(\bar \lambda(w), \bar f)} \beta^{|T'|-|\bar \lambda(w)|} \prod_{e'\in T'} x_{val(e')} \oplus b_{val(e')-r(e')+c(e')}\\
&=&\frakG_w(x,b),
\end{eqnarray*}
where the third equality follows from the fact that $\frakG_w(x,b)$ is a symmetric polynomials in $x_1,\dots, x_d$.
\end{proof}
\section{Proof of the main theorem}\label{S3}
\subsection{Preparation}
The following two lemmas will be used in the proof of Proposition \ref{propmain} which allows us to prove the main theorem by induction.
\begin{lem}\label{lemkey}
For an arbitrary finite sequence of positive integers $\ell=(\ell_1,\dots, \ell_k)$,  we have
\begin{eqnarray}
\pi_i\left((x_i \oplus b_{\ell_1}) \cdots (x_i \oplus b_{\ell_k})\right) 
&=& \sum_{j=1}^k \left(\prod_{v=1}^{j-1} (x_i \oplus b_{\ell_v}) \prod_{v=j+1}^k (x_{i+1} \oplus b_{\ell_v})\right)\nonumber\\
&& + \beta \sum_{j=1}^{k-1} \left(\prod_{v=1}^{j} (x_i \oplus b_{\ell_v}) \prod_{v=j+1}^k (x_{i+1} \oplus b_{\ell_v})\right) \label{eqkey}.
\end{eqnarray}
In particular, $\pi_i(x_i \oplus b_{\ell_1})=1$. Furthermore, the expression on the right hand side of (\ref{eqkey}) is symmetric in $x_i$ and $x_{i+1}$.
\end{lem}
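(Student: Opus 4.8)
The plan is to prove the identity \eqref{eqkey} by induction on $k$, using the Leibniz rule \eqref{ddo1} for $\pi_i$ together with the base case $\pi_i(x_i\oplus b_{\ell_1})=1$, and then to deduce the symmetry statement as a by-product of the computation rather than checking it directly.

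\textbf{Base case.} First I would establish $\pi_i(x_i\oplus b_{\ell_1})=1$. Since $x_i\oplus b_{\ell_1}=x_i+b_{\ell_1}+\beta x_i b_{\ell_1}=x_i(1+\beta b_{\ell_1})+b_{\ell_1}$, this is $x_i$ times a constant (in $x_i,x_{i+1}$) plus a constant, so \eqref{x^n} with $k=1$ and $f=1+\beta b_{\ell_1}$, together with \eqref{ddo2} applied to the constant term $b_{\ell_1}$, gives $\pi_i(x_i\oplus b_{\ell_1}) = (1+\beta b_{\ell_1}) + (-\beta) b_{\ell_1} = 1$. One checks that the right-hand side of \eqref{eqkey} at $k=1$ is the single empty product, namely $1$, so the base case matches.

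\textbf{Inductive step.} Assume \eqref{eqkey} holds for $k-1$. Write $F=(x_i\oplus b_{\ell_1})\cdots(x_i\oplus b_{\ell_{k-1}})$ and $g=x_i\oplus b_{\ell_k}$, so the left side is $\pi_i(Fg)$. The Leibniz rule \eqref{ddo1} gives $\pi_i(Fg)=\pi_i(F)g + s_i(F)\pi_i(g) + \beta s_i(F) g$. Now $s_i(F)=(x_{i+1}\oplus b_{\ell_1})\cdots(x_{i+1}\oplus b_{\ell_{k-1}})$ since $s_i$ swaps $x_i\leftrightarrow x_{i+1}$ and fixes the $b$'s; $\pi_i(g)=1$ by the base case; and $\pi_i(F)$ is given by the inductive hypothesis. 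Substituting, $\pi_i(F)g$ contributes $\sum_{j=1}^{k-1}(\prod_{v<j}(x_i\oplus b_{\ell_v})\prod_{j<v\le k-1}(x_{i+1}\oplus b_{\ell_v}))(x_i\oplus b_{\ell_k})$ plus the $\beta$-part of the hypothesis times $g$; the term $s_i(F)\pi_i(g)=s_i(F)$ is exactly the $j=k$ summand of the first sum in \eqref{eqkey}; and $\beta s_i(F)g$ is the $j=k-1$ summand of the second sum in \eqref{eqkey} (note $(x_i\oplus b_{\ell_{k-1}})$ does not appear there, but $s_i(F)g=\prod_{v=1}^{k-1}(x_{i+1}\oplus b_{\ell_v})\cdot(x_i\oplus b_{\ell_k})$ — so I must be careful: actually $\beta s_i(F) g$ has $x_{i+1}$ in positions $1,\dots,k-1$ and $x_i$ in position $k$, which matches the $j=k-1$ term only if that term reads $\prod_{v=1}^{k-1}(x_i\oplus b_{\ell_v})\prod_{v=k}^k(x_{i+1}\oplus b_{\ell_v})$ — this is where I need to reconcile indices). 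The bookkeeping here, matching each piece of the Leibniz expansion to the correct $j$-summand after shifting $k\mapsto k-1$, is the one genuinely fiddly point; everything else is formal.

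\textbf{Symmetry.} For the final claim, I would argue that $\pi_i$ always lands in the subring of polynomials symmetric in $x_i,x_{i+1}$: indeed $s_i\pi_i = \pi_i + \beta(\mathrm{id})$ composed appropriately, or more simply, from the defining formula $\pi_i(f)=\frac{(1+\beta x_{i+1})f-(1+\beta x_i)s_if}{x_i-x_{i+1}}$ one checks directly that $s_i(\pi_i f) = \pi_i f$ by swapping $x_i\leftrightarrow x_{i+1}$ in the numerator and denominator (the denominator changes sign, and so does the numerator). Hence the left-hand side of \eqref{eqkey}, and therefore the right-hand side, is symmetric in $x_i$ and $x_{i+1}$. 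The main obstacle is purely the index-shuffling in the inductive step — making sure the off-by-one alignment between the $\beta$-sum at level $k-1$ and the $\beta$-sum at level $k$ (and the absorption of $\beta s_i(F)g$) is done correctly; there is no conceptual difficulty.
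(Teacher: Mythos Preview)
Your inductive approach via the Leibniz rule is valid and is different from the paper's argument, but the gap you flagged in the inductive step is real and is \emph{not} mere index-shuffling. With the Leibniz rule in the form \eqref{ddo1} and the factorization $Fg$ with $g=x_i\oplus b_{\ell_k}$, the term $\pi_i(F)\cdot g$ multiplies the inductive expression by $x_i\oplus b_{\ell_k}$, placing $x_i$ in position $k$; likewise $\beta s_i(F)g$ has $x_{i+1}$ in positions $1,\dots,k-1$ and $x_i$ in position $k$. The target \eqref{eqkey}, however, never has $x_i$ to the right of an $x_{i+1}$. So the terms genuinely do not match one-by-one, and no reindexing will align them. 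The easy fix is to use the commuted Leibniz rule $\pi_i(Fg)=F\,\pi_i(g)+s_i(g)\,\pi_i(F)+\beta\,s_i(g)\,F$ (obtained from \eqref{ddo1} by swapping the two factors): then $F$ gives the $j=k$ summand of the first sum, $s_i(g)\pi_i(F)=(x_{i+1}\oplus b_{\ell_k})\pi_i(F)$ gives the $j\le k-1$ summands of both sums, and $\beta\,s_i(g)F$ gives the $j=k-1$ summand of the $\beta$-sum, with no rearrangement needed. Your symmetry argument ($s_i\pi_i=\pi_i$ directly from the defining fraction) is correct.

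For comparison, the paper does not argue by induction at all. It observes that it suffices to treat $i=1$, $\ell=(1,\dots,k)$, and then identifies the left side of \eqref{eqkey} with $\frakG_w$ for the Grassmannian permutation $w=(k{+}1,1,2,\dots,k,k{+}2,\dots,n)$ via \eqref{eqGr}; applying $\pi_1$ yields $\frakG_{ws_1}$, whose tableau expansion \eqref{eqGr} is exactly the right side. Symmetry then comes for free because $\frakG_{ws_1}$ is symmetric in $x_1,x_2$. Your route is more elementary and self-contained (it does not invoke \eqref{eqGr}); the paper's route is slicker but leans on the Grassmannian formula as a black box.
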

\begin{proof}
It suffices to show the claim for $i=1$ and $\ell=(1,\dots,k)$:
\begin{eqnarray}
\pi_1\left((x_1 \oplus b_{1}) \cdots (x_1 \oplus b_{k})\right) 
&=& \sum_{j=1}^k \left(\prod_{v=1}^{j-1} (x_1 \oplus b_{v}) \prod_{v=j+1}^k (x_2 \oplus b_{v})\right)\nonumber\\
&& + \beta \sum_{j=1}^{k-1} \left(\prod_{v=1}^{j} (x_1 \oplus b_{v}) \prod_{v=j+1}^k (x_2 \oplus b_{v})\right)\label{eqkey2}.
\end{eqnarray}
Choose an integer $n>k$ and consider the Grassmannian permutation $w=(k+1,1,2,\cdots, k,k+2,\dots, n)$ with descent at $1$.
We have $\bar\lambda(w) = (k)$ and $\bar f(w)=1$. There is only one set-valued tableau of shape $\bar\lambda(w)$ with flagging $\bar f(w)$, which assigns $\{1\}$ to each box. Therefore it follows from (\ref{eqGr}) that
\[
\frakG_w(x,b) = (x_1 \oplus b_1) \cdots (x_1 \oplus b_k).
\]
Now consider the element $ws_1=(1,k+1,2,3,\dots, k,k+2,\dots,n)$. We have $\bar\lambda(ws_1) = (k-1)$ and $\bar f(w)=(2)$. Since $\ell(w)=\ell(ws_1)+1$, we find that
\[
\pi_1\frakG_w(x,b) = \frakG_{ws_1}(x,b).
\]
We can also see that $\frakG_{ws_1}(x,b)$ coincides with the right hand side of (\ref{eqkey2}) by (\ref{eqGr}). Therefore (\ref{eqkey2}) holds. The last claim follows from the fact that $\frakG_{ws_1}(x,b)$ is symmetric in $x_1$ and $x_2$.
\end{proof}
\begin{lem}\label{lemtabkey}
Let $w$ be a $321$-avoiding permutation. Let $f(w)=(f_1,\dots, f_r)$ and $\sigma(w)=\lambda/\mu$. Suppose that $f_i+1<f_{i+1}$ and $w(f_i)>f_i+1$ for some $i$. 
Then $ws_{f_i}$ is a $321$-avoiding permutation and we have
\begin{eqnarray*}
f(ws_{f_i})=(f_1,\dots, f_{i-1}, f_i+1, f_{i+1},\dots, f_r), \ && \ \ \ \ h(ws_{f_i})= h(w),\\
\lambda(ws_{f_i})= (\lambda_1,\dots, \lambda_{i-1},\lambda_{i}-1,\lambda_{i+1},\dots, \lambda_r), && \ \ \ \ \mu(ws_{f_i})=\mu(w).
\end{eqnarray*}
 In particular, $\ell(w)=\ell(ws_i)+1$.
\end{lem}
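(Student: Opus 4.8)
The plan is to verify the claims about $ws_{f_i}$ directly from the combinatorial description of $321$-avoiding permutations in terms of the pair $(f(w), h(w))$, and then translate through the formulas (\ref{lamdef}) and (\ref{mudef}). First I would unpack what multiplying $w$ on the right by $s_{f_i}$ does: it swaps the values $w(f_i)$ and $w(f_i+1)$ in the one-line notation. The hypotheses are that $f_i + 1 < f_{i+1}$ (so $f_i + 1$ is \emph{not} one of the ascent indices in $f(w)$, hence $f_i+1 \geq w(f_i+1)$) and $w(f_i) > f_i + 1$. I would first record the consequence $w(f_i) \geq f_i + 2$, and note that since $f_i + 1 \notin f(w)$ we have $w(f_i+1) \leq f_i + 1$.

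Next I would check that $ws_{f_i}$ is still $321$-avoiding and compute its ascent set. Since $w$ is $321$-avoiding, $w$ restricted to the ascent positions $f(w)$ is increasing and $w$ restricted to the complement is increasing; swapping the entries in positions $f_i$ and $f_i+1$ produces $ws_{f_i}$ with $ws_{f_i}(f_i) = w(f_i+1) \leq f_i+1 < f_i+1$... more precisely $ws_{f_i}(f_i) = w(f_i+1) \le f_i+1$, and $f_i+1 > $ position $f_i$? I need $ws_{f_i}(f_i) < f_i$ or $= f_i$ is impossible since it would force a fixed point conflict; the cleaner route is: the new ascent indices are those $j$ with $ws_{f_i}(j) > j$. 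For $j \neq f_i, f_i+1$ nothing changes. At $j = f_i$: $ws_{f_i}(f_i) = w(f_i+1) \le f_i + 1$; I must rule out $w(f_i+1) > f_i$, i.e. $w(f_i+1) = f_i+1$, which would mean $f_i+1$ is a fixed point — but then in $ws_{f_i}$ position $f_i$ has value $f_i+1 > f_i$, so $f_i$ stays an ascent; I'd need to handle this sub-case, showing it still is consistent with the stated $f(ws_{f_i})$ (it is, since $f_i$ remains in the list but now... wait, the claim replaces $f_i$ by $f_i+1$). Let me instead argue: after the swap, $ws_{f_i}(f_i+1) = w(f_i) \ge f_i + 2 > f_i + 1$, so $f_i + 1$ becomes an ascent index; and $ws_{f_i}(f_i) = w(f_i+1) \le f_i+1$. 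The case $w(f_i+1) = f_i+1$ cannot occur because $f_i+1 < f_{i+1}$ and the entries of $h(w)$ at... actually it \emph{can} occur a priori, so I should show directly that $ws_{f_i}(f_i) < f_i$ is forced, or reconcile the boundary. The robust approach: $ws_{f_i}$ agrees with $w$ except it moves the value $w(f_i)$ from slot $f_i$ to slot $f_i+1$ and the value $w(f_i+1)$ the other way; since $w(f_i+1) \le f_i + 1$ and if equality held $f_i+1$ would be a fixed point of $w$, but $f_i + 1 < f_{i+1}$ and $f_i+1 > f_i$ forces $f_i+1 \in f^c(w)$ with $w(f_i+1) \le f_i+1$, equality allowed — I will simply include the subcase, noting $h$ and the shape come out the same either way.

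Then, granting $f(ws_{f_i}) = (f_1, \dots, f_{i-1}, f_i+1, f_{i+1}, \dots, f_r)$ and $h(ws_{f_i}) = h(w)$ (the multiset of values $\{w(f_j) : j\}$ is unchanged since $w(f_i)$ has merely moved to the newly-created ascent slot, and $w(f_i+1)$ was not and is not an ascent value), I would plug into the definitions. With $r$ unchanged and $f_r$ unchanged, (\ref{lamdef}) gives $\lambda_j(ws_{f_i}) = (ws_{f_i})(f_r) - r - (f_j' - j)$ where $f_j'$ is the new flagging; for $j \neq i$ this is $\lambda_j(w)$, and for $j = i$ it is $w(f_r) - r - (f_i + 1 - i) = \lambda_i(w) - 1$. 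Similarly (\ref{mudef}) gives $\mu_j(ws_{f_i}) = w(f_r) - r - (h_j(w) - j) = \mu_j(w)$ for all $j$, since $h$ and $f_r$ are unchanged. Finally $\ell(ws_{f_i}) = |\sigma(ws_{f_i})| = |\sigma(w)| - 1 = \ell(w) - 1$, i.e. $\ell(w) = \ell(ws_{f_i}) + 1$; I should also remark that the last sentence of the lemma statement presumably means $\ell(w) = \ell(ws_{f_i}) + 1$ rather than $\ell(ws_i)+1$. The main obstacle is the bookkeeping at the boundary position $f_i+1$ — making sure the swap genuinely creates exactly one new ascent index (at $f_i+1$) and destroys exactly one (at $f_i$) with no interaction with $f_{i-1}$ or $f_{i+1}$, which is where the hypotheses $f_i + 1 < f_{i+1}$ and $w(f_i) > f_i+1$ are used; everything after that is a substitution into (\ref{lamdef})–(\ref{mudef}).
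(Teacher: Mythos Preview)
Your overall strategy---compute $f(ws_{f_i})$ and $h(ws_{f_i})$ directly from the swap and then substitute into (\ref{lamdef}) and (\ref{mudef})---is exactly the paper's approach, and the substitution step you describe is correct.

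The genuine gap is the boundary case $w(f_i+1)=f_i+1$, which you acknowledge but then dismiss with ``equality allowed --- I will simply include the subcase, noting $h$ and the shape come out the same either way.'' This is wrong: if $w(f_i+1)=f_i+1$, then after the swap $ws_{f_i}(f_i)=f_i+1>f_i$, so $f_i$ \emph{remains} an ascent index, while $ws_{f_i}(f_i+1)=w(f_i)>f_i+1$ makes $f_i+1$ a new ascent index. Thus $f(ws_{f_i})$ would have length $r+1$, not $r$, and the formulas for $\lambda(ws_{f_i})$, $\mu(ws_{f_i})$ in the lemma would fail. You must \emph{rule out} this case, not include it.

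The paper handles this by observing that if $w(k)=k$ for some $k$, then $(w(1),\dots,w(k-1))$ is a permutation of $\{1,\dots,k-1\}$ (this uses the $321$-avoiding hypothesis: if some $w(j)>k$ for $j<k$, then by pigeonhole some $w(\ell)<k$ for $\ell>k$, giving a $321$-pattern $j<k<\ell$), whence $w(k-1)\le k-1$. Applied with $k=f_i+1$, this forces $w(f_i)\le f_i$, contradicting $f_i\in f(w)$. So the fixed-point case cannot occur, and your argument goes through once you insert this observation.
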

\begin{proof}
First observe that $f_i+1<f_{i+1}$ implies that $w(f_i+1) < f_i+1$. Indeed, if $w(k)=k$ for some $k$, we find that $w'=(w(1),\dots, w(k-1))$ is a permutation in $S_{k-1}$ since $w(f_j)>f_j$ for all $j$ and $h^c(w)$ is an increasing sequence.  Therefore $w(k-1)\leq k-1$. It follows that $w s_{f_i}(f_i) = w(f_i + 1) \leq f_i$ and $ws_{f_i}(f_i+1) = w(f_i)>f_i+1$. From this, we find that $f(ws_{f_i})$ and $h(ws_{f_i})$ are as given in the claim. The rest can be checked by computing $\lambda(ws_{f_i})$ and $\mu(ws_{f_i})$ from the definition (\ref{lamdef}) and (\ref{mudef}).
\end{proof}
The following proposition is the main ingredient of the proof of Theorem \ref{thmmain}.
\begin{prop}\label{propmain}
Let $w$ be a $321$-avoiding permutation. Let $f(w)=(f_1,\dots, f_r)$ and $\sigma(w)=\lambda/\mu$. Suppose that $f_i+1<f_{i+1}$ and $w(f_i)>f_i+1$ for some $i$. 
Let $t:=f_i$. Then we have
\[
\partial_{t} \bbT_w(x,b) = \bbT_{ws_t}(x,b),
\]
where $\bbT_w(x,b)$ is the right hand side of (\ref{eqmain}).
\end{prop}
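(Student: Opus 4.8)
The plan is to prove Proposition \ref{propmain} by analyzing how the divided difference operator $\partial_t = \pi_t$ acts on the sum $\bbT_w(x,b)$ over set-valued tableaux, column by column, following the strategy of Wachs \cite{Wachs}. Write $t = f_i$. The first observation is that a variable $x_v$ appears in an entry $e$ of $T \in \FSVT(\sigma(w), f(w))$ only through the factor $x_{val(e)} \oplus b_{\lambda_{r(e)} + f_{r(e)} - c(e) - val(e) + 1}$, and the only entries whose value can equal $t$ or $t+1$ are those in rows $1, \dots, i$ (since the flagging is $f(w)$ and $f_i = t < t+1 \le f_{i+1}$, rows with index $> i$ cannot contain $t+1$, and by column strictness and the shape constraints one pins down exactly where $t, t+1$ may occur). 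So I would first split each tableau according to the "border strip" of boxes that could contain $t$ or $t+1$, and show that the part of the monomial not involving $x_t, x_{t+1}$ is symmetric under $s_t$ (in fact independent of the distinction), hence factors out of $\pi_t$ by (\ref{ddo3}).

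Next, for the relevant boxes I would organize the computation by columns. Within a single column $c$, the entries containing values in $\{t, t+1\}$ sit in consecutive rows, and column-strictness forces at most one box to contain both $t$ and $t+1$, with boxes above containing only values $< t$ and boxes below containing only values $> t+1$. The key algebraic input is Lemma \ref{lemkey}: for a box that may be filled by a set containing some sub-collection of $\{t, t+1\}$ together with forced smaller/larger values, the contribution to $\pi_t$ of the product $(x_t \oplus b_{\ell_1}) \cdots (x_t \oplus b_{\ell_k})$ telescopes into a sum of mixed products in $x_t$ and $x_{t+1}$, plus a $\beta$-correction term. I would match the two terms on the right-hand side of (\ref{eqkey}) — the "un-weighted" sum and the "$\beta$-weighted" sum — with, respectively, moving the dividing point between $x_t$-entries and $x_{t+1}$-entries in the tableau, and adding an extra entry (which accounts for the $\beta^{|T| - |\sigma|}$ bookkeeping and the fact that $|\sigma(ws_t)| = |\sigma(w)| - 1$ by Lemma \ref{lemtabkey}). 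The crucial point is that the indexing $\lambda_{r(e)} + f_{r(e)} - c(e) - val(e) + 1$ of the $b$-variable is designed so that replacing $val(e) = t$ by $val(e) = t+1$ simultaneously in a box changes the $b$-subscript by exactly the right amount, and that passing from $w$ to $ws_t$ shifts $\lambda_i \mapsto \lambda_i - 1$ (Lemma \ref{lemtabkey}) in precisely the way that makes the $b$-subscripts on the $ws_t$ side agree; I would verify this subscript arithmetic directly.

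With the single-column analysis in hand, the global step is to take the product over all columns of the border region and use the Leibniz rule (\ref{ddo1}) repeatedly, or rather to recognize that, after factoring out the $s_t$-symmetric part, what remains is itself a product of $(x_t \oplus b_\ell)$-factors across the whole border strip read in the appropriate (column-by-column, top-to-bottom) order, so that Lemma \ref{lemkey} applies in one stroke to the entire product. Then I would read off that the right-hand side of (\ref{eqkey}) is exactly $\sum_{T' \in \FSVT(\sigma(ws_t), f(ws_t))} \beta^{|T'| - |\sigma(ws_t)|} \prod_{e' \in T'} (\cdots)$, i.e. $\bbT_{ws_t}(x,b)$: the first sum in (\ref{eqkey}) produces the tableaux of the smaller shape with no new entries, and the $\beta$-sum produces those with one extra entry in the affected row. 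The symmetry assertion in Lemma \ref{lemkey} guarantees $\bbT_{ws_t}$ is symmetric in $x_t, x_{t+1}$, consistent with $\ell(ws_t s_t) = \ell(ws_t) + 1$.

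The main obstacle I expect is the combinatorial bookkeeping of the border region: precisely identifying which boxes can carry values in $\{t, t+1\}$, checking that column-strictness plus the flagging force the "staircase" structure that lets the column-wise products line up into a single chain of $(x_t \oplus b_\ell)$ factors, and — most delicately — verifying that the bijection between (tableaux for $w$ with a chosen split point / optional extra entry) and (tableaux for $ws_t$) matches the $b$-subscripts on both sides. This is where the explicit formulas (\ref{lamdef}), (\ref{mudef}) and the shape change in Lemma \ref{lemtabkey} must be used carefully; the rest is a mechanical application of (\ref{ddo1}), (\ref{ddo3}) and Lemma \ref{lemkey}.
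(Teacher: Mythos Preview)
Your proposal rests on a mistaken identification of where the values $t$ and $t+1$ can appear. The flagging $f(w)$ gives \emph{upper} bounds on row entries: row $j$ contains only values in $\{1,\dots,f_j\}$. Since the $f_j$ are strictly increasing with $f_i=t$, rows $1,\dots,i-1$ cannot contain $t$ or $t+1$ at all; row $i$ can contain $t$ but not $t+1$; and rows $i+1,\dots,r$ (having $f_j\ge f_{i+1}>t+1$) can contain both. You have this reversed, so your ``border strip'' picture and the column-by-column plan are built on the wrong region of the tableau.

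This matters structurally. The asymmetry in $x_t,x_{t+1}$ comes \emph{only} from row $i$, which may carry a run of $t$'s at its right end contributing a product $\prod_v(x_t\oplus b_{\ell_v})$; this is the sole piece on which $\pi_t$ acts nontrivially. The paper groups tableaux into equivalence classes according to the set of boxes containing $t$ or $t':=t+1$ (Figure~1), and shows that within each class the contributions from rows $i+1,\dots,i+k$ --- the one-row regions $A_s$ ($s\ge 2$) where a box may hold $t$, $t'$, or both, and the two-row regions $B_s$ where $t$ sits directly above $t'$ --- are already symmetric in $x_t,x_{t'}$ (for $A_s$ this is precisely the symmetry clause of Lemma~\ref{lemkey}; for $B_s$ it is immediate because the two $b$-subscripts in each column coincide). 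Thus $\pi_t$ acts only on the row-$i$ block $R(A_1)$, not on a product ``across the whole border strip'' as you propose. After Lemma~\ref{lemkey} is applied to $R(A_1)$ there is a further cancellation you do not anticipate: classes with $m_1=0$ (no $t$ in row $i$) acquire a factor $-\beta$ from~(\ref{ddo2}), and these cancel against classes with $m_1=1$ in which the box $(i,\lambda_i)$ also carries an entry smaller than $t$; only the surviving classes biject with the equivalence classes for $\sigma(ws_t)$.
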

\begin{proof}
We define an equivalence relation in $\FSVT(\sigma(w),f(w))$ as follows: $T_1\sim T_2$ if the collection of boxes containing $t$ and $t':=t+1$ is the same for $T_1$ and $T_2$. We can write
\[
\bbT_w=\bbT_w(x,b) = \sum_{\scA\in \FSVT(\sigma(w),f(w))/\sim} \left(\sum_{T\in \scA} \beta^{|T|-|\sigma(w)|}(x|b)^T\right),
\]
where we denote
\[
(x|b)^T
=\prod_{e\in T} x_{val(e)} \oplus b_{\lambda_{r(e)} + f_{r(e)} - c(e) - val(e) +1}
=\prod_{e\in T} x_{val(e)} \oplus b_{\lambda_1' + r(e) - c(e) - val(e) +1}.
\]
By (\ref{lamdef}), we see that the condition $f_i+1<f_{i+1}$ implies $\lambda_i>\lambda_{i+1}$. Let $\scA$ be the equivalence class for $\FSVT(\sigma(w),f(w))$ whose tableaux have the same configuration of $t$ and $t'$ as shown in Figure 1. 
\setlength{\unitlength}{0.5mm}\begin{center}\begin{picture}(210,120)\thicklines
\put(50,120){\line(1,0){165}}
\put(50,120){\line(0,-1){20}}
\put(30,100){\line(0,-1){20}}
\put(30,100){\line(1,0){20}}
\put(10,80){\line(0,-1){30}}
\put(10,80){\line(1,0){20}}
\put(0,50){\line(1,0){10}}

\put(0,50){\line(0,-1){50}}

\put(200,100){\line(1,0){15}}
\put(200,100){\line(0,-1){10}}
\put(190,90){\line(1,0){5}}
\put(215,120){\line(0,-1){20}}

\put(140,90){\line(1,0){60}}
\put(80,80){\line(1,0){120}}
\put(80,70){\line(1,0){90}}
\put(80,60){\line(1,0){30}}
\put(160,50){\line(1,0){30}}
\put(140,30){\line(1,0){20}}
\put(0,0){\line(1,0){140}}

\put(180,93){\small{$A_1$}}
\put(150,93){\small{$B_1$}}
\put(120,83){\small{$A_2$}}
\put(90,83){\small{$B_2$}}
\put(45,33){\small{$A_k$}}
\put(20,33){\small{$B_k$}}

\put(175,83){\small{$t t\dots t$}}
\put(203,83){\small{$\cdots i$-th row}}
\put(145,83){\small{$t \dots t$}}
\put(145,73){\small{$t'\dots t'$}}
\put(85,73){\small{$t \dots t$}}
\put(85,63){\small{$t'\dots t'$}}
\put(115,73){\small{$*\dots *$}}
\put(178,75){\tiny{$m_1$}}
\put(150,65){\tiny{$r_1$}}
\put(120,65){\tiny{$m_2$}}
\put(90,55){\tiny{$r_2$}}
\put(73,48){{$\cdot$}}
\put(68,43){{$\cdot$}}
\put(63,38){{$\cdot$}}
\put(39,23){\small{$*\dots *$}}
\put(15,23){\small{$t\dots t$}}
\put(13.5,13){\small{$t'\dots t'$}}
\put(160,13){\small{$\cdots (i+k)$-th row}}
\put(45,16){\tiny{$m_k$}}
\put(19.5,6){\tiny{$r_k$}}
\put(10,30){\line(1,0){50}}
\put(10,20){\line(1,0){50}}
\put(10,10){\line(1,0){25}}
\put(10,30){\line(0,-1){20}}
\put(35,30){\line(0,-1){20}}
\put(60,30){\line(0,-1){10}}
\put(200,90){\line(0,-1){10}}
\put(170,90){\line(0,-1){20}}
\put(140,90){\line(0,-1){20}}
\put(110,80){\line(0,-1){20}}
\put(80,80){\line(0,-1){20}}
\put(190,80){\line(0,-1){30}}
\put(160,50){\line(0,-1){20}}
\put(140,30){\line(0,-1){30}}
\end{picture}\\
Figure 1.
\end{center}
In Figure 1, the rightmost one-row rectangle with entries $t$ has $m_1$ boxes and is denoted by $A_1$. For $s\geq 2$,  the $s$-th one-row rectangle with $*$, denoted by $A_s$, has $m_s$ boxes each of which contains $t$, $t'$, or both so that the total number of entries $t$ and $t'$ in $A_s$ is $m_s$ or $m_s+1$.  The $s$-th rectangle with two rows where the first row contains $t$ and the second row contains $t'$ has $r_s$ columns and is denoted by $B_s$. Note that $m_s$ and $r_s$ may be $0$ and hence the rectangles in Figure 1 may not be connected. Also the leftmost box of $A_s\  (s\geq 1)$ may contain a number less than $t$, the rightmost box of $A_s\ (s\geq 2)$ may contain a number greater than $t'$, and so on.  Let $a_s$ be the column index of the leftmost box in $A_s$. Similarly, let $b_s$ be the column index of the leftmost column in $B_s$. 

We can write
\[
\sum_{T\in \scA}\beta^{|T|-|\sigma(w)|}(x|b)^T = R(A_1)\cdot \prod_{s=2}^k R(A_s) \cdot \prod_{s=1}^k R(B_s) \cdot R(\scA)
\]
where $R(A_s)$ and $R(B_s)$ are the polynomials contributed from $A_s$ and $B_s$ respectively, and $R(\scA)$ is the polynomial contributed from the entries other than $t$ and $t'$. More precisely we have
\begin{eqnarray*}
R(A_1) &=& \prod_{\ell=a_1}^{a_1+m_1-1}(x_t \oplus b_{\lambda_1' + i - \ell - t +1}) =\prod_{v=1}^{m_1}(x_t \oplus b_{\lambda_1' + i - (a_1+v-1) - t +1})\\
R(A_s)  
&=&  \sum_{j=0}^{m_s} \left(
\prod_{\ell=a_s}^{a_s+j-1} (x_t \oplus b_{\lambda_1' + (i+s-1) - \ell - t +1}) 
\prod_{\ell=a_s+j+1}^{a_s+m_s} (x_{t'} \oplus b_{\lambda_1' + (i+s-1) - \ell - t +1})\right)\\
&& + \beta \sum_{j=1}^{m_s} \left(
\prod_{\ell=a_s}^{a_s+j-1} (x_t \oplus b_{\lambda_1' + (i+s-1) - \ell - t +1}) 
\prod_{\ell=a_s+j}^{a_s+m_s} (x_{t'} \oplus b_{\lambda_1' + (i+s-1) - \ell - t +1})\right) \ \ \ (2\leq s\leq k)\\
R(B_s)  &=& \prod_{\ell=b_s}^{b_s+r_s-1} (x_t \oplus b_{\lambda_1' + (i+s-1) - \ell - t +1})(x_{t'} \oplus b_{\lambda_1' + (i+s) - \ell - t' +1})  \ \ \ (1\leq s\leq k).
\end{eqnarray*}
Observe that $R(B_s)$ for $s\geq 1$ is symmetric in $x_{t}$ and $x_{t'}$ , and so is $R(A_s)$ for $s\geq 2$ by Lemma \ref{lemkey}. Therefore, if $m_1=0$, then $r_1=0$, $R(A_1)=0$ and by (\ref{ddo2}) we have 
\begin{equation}\label{eqA1}
\pi_t\left(\sum_{T\in \scA}\beta^{|T|-|\sigma(w)|}(x|b)^T\right) = -\beta \prod_{s=2}^k R(A_s) \cdot \prod_{s=2}^k R(B_s) \cdot R(\scA).
\end{equation}
If $m_1=1$, we have, by (\ref{ddo3}) and Lemma \ref{lemkey}, 
\begin{equation}\label{eqA2}
\pi_t\left(\sum_{T\in \scA}\beta^{|T|-|\sigma(w)|}(x|b)^T\right) = \prod_{s=2}^k R(A_s) \cdot \prod_{s=1}^k R(B_s) \cdot R(\scA).
\end{equation}
If $m_1\geq 2$, we have, also by (\ref{ddo3}) and Lemma \ref{lemkey}, 
\begin{eqnarray}\label{eqA3}
\pi_t\left(\sum_{T\in \scA}\beta^{|T|-|\sigma(w)|}(x|b)^T\right)
&=&\left(\sum_{j=1}^{m_1} \left(\prod_{v=1}^{j-1} (x_t \oplus b_{\ell_v}) \prod_{s=j+1}^{m_1} (x_{t'} \oplus b_{\ell_v})\right)\right.\nonumber\\
&&\ \ \ \ \  + \beta \left.\sum_{j=1}^{m_1-1} \left(\prod_{v=1}^{j} (x_t \oplus b_{\ell_v}) \prod_{v=j+1}^{m_1} (x_{t'} \oplus b_{\ell_v})\right)\right)\nonumber\\
&&\ \ \ \ \ \ \ \ \  \times \prod_{s=2}^k R(A_s) \cdot \prod_{s=1}^k R(B_s) \cdot R(\scA),
\end{eqnarray}
where $\ell_v={\lambda_1' + i - (a_1+v-1) - t +1}$.

%

We consider the decomposition
\[
\FSVT(\sigma(w),f(w))/\!\!\sim \ = \calF_1 \sqcup \calF_2\sqcup \calF_3\sqcup \calF_4
\]
where $\calF_1,\dots, \calF_4$ are the sets of equivalence classes whose configurations of the boxes containing $t$ or $t'$ satisfy the following conditions respectively:
\begin{itemize}
\item[(1)] $m_1=0$ (so that $r_1=0$),
\item[(2)] $m_1=1$ and the box at $(i,\lambda_i)$ in $\lambda$ contains more than one entry (so that $r_1=0$),
\item[(3)] $m_1=1$ and the box at $(i,\lambda_i)$ in $\lambda$ contains contains only $t$,
\item[(4)] $m_1\geq 2$.
\end{itemize}
Observe that there is a bijection from $\calF_2$ to $\calF_1$ sending $\scA_2$ to $\scA_1$ by deleting $t$ in the rectangle $A_1$ and that $R(\scA_2) = \beta R(\scA_1)$. Therefore,  by the expressions (\ref{eqA1}) and (\ref{eqA2}), we have
\[
\sum_{\scA \in \calF_1\sqcup \calF_2}\pi_t\left(\sum_{T \in \scA}  \beta^{|T| - |\sigma(w)|}(x|b)^T\right) = 0.
\]
Thus we obtain
\[
\pi_t(\bbT_w(x,b)) = \sum_{\scA\in \calF_3 \sqcup\calF_4} \pi_t\left(\sum_{T\in \scA} \beta^{|T|-|\sigma(w)|}(x|b)^T\right).
\]

On the other hand, by Lemma \ref{lemtabkey}, the skew partition $\sigma(ws_t)$ is obtained from $\sigma(w)$ by deleting the rightmost box of the $i$-th row and the flagging $f(ws_t)$ is obtained from $f(w)$ by adding $1$ to $f_i$. Thus for each equivalence class $\scA'$ for $\FSVT(\sigma(ws_t),f(ws_t))$, the corresponding tableaux have the following configuration of $t$ and $t'$ in Figure 2 where $m_1\geq 1$. 
\setlength{\unitlength}{0.5mm}
\begin{center}
\begin{picture}(210,130)
\thicklines
\put(50,120){\line(1,0){165}}
\put(50,120){\line(0,-1){20}}
\put(30,100){\line(0,-1){20}}
\put(30,100){\line(1,0){20}}
\put(10,80){\line(0,-1){30}}
\put(10,80){\line(1,0){20}}
\put(0,50){\line(1,0){10}}

\put(0,50){\line(0,-1){50}}

\put(200,100){\line(1,0){15}}
\put(200,100){\line(0,-1){10}}
\put(190,90){\line(1,0){5}}
\put(215,120){\line(0,-1){20}}

\put(140,90){\line(1,0){60}}
\put(80,80){\line(1,0){110}}
\put(80,70){\line(1,0){90}}
\put(80,60){\line(1,0){30}}
\put(160,50){\line(1,0){30}}
\put(140,30){\line(1,0){20}}
\put(0,0){\line(1,0){140}}
\put(171,83){\small{$*\dots *$}}
\put(145,83){\small{$t \dots t$}}
\put(145,73){\small{$t'\dots t'$}}
\put(85,73){\small{$t \dots t$}}
\put(85,63){\small{$t'\dots t'$}}
\put(115,73){\small{$*\dots *$}}
\put(172,75){\tiny{$m_1\!-\!1$}}
\put(150,65){\tiny{$r_1$}}
\put(120,65){\tiny{$m_2$}}
\put(90,55){\tiny{$r_2$}}
\put(73,48){{$\cdot$}}
\put(68,43){{$\cdot$}}
\put(63,38){{$\cdot$}}
\put(39,23){\small{$*\dots *$}}
\put(15,23){\small{$t\dots t$}}
\put(13.5,13){\small{$t'\dots t'$}}
\put(45,16){\tiny{$m_k$}}
\put(19.5,6){\tiny{$r_k$}}
\put(10,30){\line(1,0){50}}
\put(10,20){\line(1,0){50}}
\put(10,10){\line(1,0){25}}
\put(10,30){\line(0,-1){20}}
\put(35,30){\line(0,-1){20}}
\put(60,30){\line(0,-1){10}}
\put(190,90){\line(0,-1){10}}
\put(170,90){\line(0,-1){20}}
\put(140,90){\line(0,-1){20}}
\put(110,80){\line(0,-1){20}}
\put(80,80){\line(0,-1){20}}
\put(190,80){\line(0,-1){30}}
\put(160,50){\line(0,-1){20}}
\put(140,30){\line(0,-1){30}}
\end{picture}\\
Figure 2.
\end{center}
Consider the decomposition 
\[
\FSVT(\sigma(ws_t),f(ws_t))/\!\!\sim\ = \calF_3' \sqcup \calF_4'
\]
where $\calF_3'$ is the set of equivalence classes such that $m_1=1$ and $\calF_4'$ is the set of equivalence classes such that $m_1\geq 2$. Obviously there are bijections $\calF_3 \to \calF_3'$ and $\calF_4\to \calF_4'$. Namely, $\scA\in \calF_3$ corresponds to $\scA'\in \calF_3$ by removing the single $t$ in the rectangle $A_1$, and $\scA\in \calF_4$ corresponds to $\scA'\in \calF_4$ by removing the last box of $A_1$ with $t$ and replace all other $t$'s by $*$, without changing all other entries of the tableaux. 

Under these bijections, $\sum_{T'\in \scA'}\beta^{|T'|-|\sigma(ws_t)|}(x|b)^{T'}$ is exactly the right hand side of (\ref{eqA2}) if $\scA\in \calF_3$ and (\ref{eqA3}) if $\scA\in \calF_4$. Thus we have 
\[
\pi_t\left(\sum_{T\in \scA}\beta^{|T|-|\sigma(w)|}(x|b)^T\right) =\sum_{T'\in \scA'}\beta^{|T'|-|\sigma(ws_t)|}(x|b)^{T'}.
\]
Therefore
\begin{eqnarray*}
\pi_t(\bbT_w) 
&=& \sum_{\scA\in \calF_3} \pi_t\left(\sum_{T\in \scA} \beta^{|T|-|\sigma(w)|}(x|b)^T\right)+\sum_{\scA\in \calF_4} \pi_t\left(\sum_{T\in \scA} \beta^{|T|-|\sigma(w)|}(x|b)^T\right)\\
&=& \sum_{\scA'\in \calF_3'} \sum_{T'\in \scA'}\beta^{|T'|-|\sigma(ws_t)|}(x|b)^{T'}+\sum_{\scA'\in \calF_4'} \sum_{T'\in \scA'}\beta^{|T'|-|\sigma(ws_t)|}(x|b)^{T'}\\
&=& \bbT_{ws_t}.
\end{eqnarray*}
This completes the proof.
\end{proof}
\subsection{Proof of Theorem \ref{thmmain}}
Let $h:=(h_1,\dots, h_r)$ be an increasing finite sequence of positive integers such that $h_1\geq 2$ and $h_r\leq n$. Let $C_h$ be the set of the $321$-avoiding permutations $v$ in $S_n$ such that $h(v)=h$. The union of $C_h$'s where $h$ runs over the set of all such increasing sequences coincides with the set of all $321$-avoiding permutations in $S_n$. We can define a total order on $C_h$ by $v < v'$ if $f(v)<f(v')$ in the lexicographic order. We show that $\frakG_v= \bbT_v$ for each $v\in C_h$ by induction on this order. The minimum element in $C_h$ is the Grassmannian permutation $v^{(0)}$ where $f(v^{(0)})=(1,\dots, r)$ and $h(v^{(0)}) = h$, and we have $\frakG_{v^{(0)}}(x,b) = \bbT_{v^{(0)}}(x,b)$ by Lemma \ref{thmGr}. Let $v\in C_h$ and suppose that $\frakG_w=\bbT_w$ for all $w < v$. Let $f(v)=(f_1,\dots, f_r)$. If $v^{(0)}<v$, then there is an index $i$ such that $f_{i-1}<f_{i}-1$. Then $v=ws_t$ with $t:=f_i -1$ where $w \in C_h$ is defined by $f(w)=(f_1,\dots, f_{i-1}, f_i-1, f_{i+1},\dots, f_r)$. Since $w<v$, we have $\frakG_w=\bbT_w$. Furthermore, $\ell(w)=\ell(v)+1$ and $w$ satisfies the conditions in Proposition \ref{propmain}. Now it follows from Proposition \ref{propmain} that
\[
\frakG_v = \pi_t(\frakG_w) = \pi_t(\bbT_w) = \bbT_v.
\]
This completes the proof of Theorem \ref{thmmain}.
\qed
\begin{exm}
Let us demonstrate how Proposition \ref{propmain} implies Theorem \ref{thmmain} in examples. Let $h=(3,5)$. Then the minimum element in $C_h$ is the Grassmannian permutation $v^{(0)}=(35124)$ where $f(v^{(0)})=(1,2)$. Consider $v=(31254) \in C_h$ where $f(v)=(1,4)$. We have $vs_3s_2=v^{(0)}$, $\ell(v^{(0)}) = \ell(v)+2$, and hence $\frakG_v=\pi_3\pi_2(\frakG_{v^{(0)}})$. Let $w=v^{(0)}s_2=(31524)$ so that $f(w)=(1,3)$. The corresponding skew partitions are 
\setlength{\unitlength}{0.4mm}
\begin{center}
\begin{picture}(30,40)
\put(15,30){$v^{(0)}$}
\put(10,20){\line(1,0){20}}
\put(0,10){\line(1,0){30}}
\put(0,0){\line(1,0){30}}
\put(0,0){\line(0,1){10}}
\put(10,0){\line(0,1){20}}
\put(20,00){\line(0,1){20}}
\put(30,00){\line(0,1){20}}
\end{picture}
\begin{picture}(35,40)
\put(10,10){$\stackrel{s_2}{\longrightarrow}$}
\end{picture}
\put(15,30){$w$}
\begin{picture}(30,40)
\put(10,20){\line(1,0){20}}
\put(0,10){\line(1,0){30}}
\put(0,0){\line(1,0){20}}
\put(0,0){\line(0,1){10}}
\put(10,0){\line(0,1){20}}
\put(20,0){\line(0,1){20}}
\put(30,10){\line(0,1){10}}
\end{picture}
\begin{picture}(35,40)
\put(10,10){$\stackrel{s_3}{\longrightarrow}$}
\end{picture}
\begin{picture}(30,40)
\put(15,30){$v$}
\put(10,20){\line(1,0){20}}
\put(0,10){\line(1,0){30}}
\put(0,0){\line(1,0){10}}
\put(0,0){\line(0,1){10}}
\put(10,0){\line(0,1){20}}
\put(20,10){\line(0,1){10}}
\put(30,10){\line(0,1){10}}
\end{picture}
\end{center}
Since we know $\frakG_{v^{(0)}} = \bbT_{v^{(0)}}$ from Lemma \ref{thmGr}, Proposition \ref{propmain} implies
\[
\frakG_v =\pi_3\pi_2(\frakG_{v^{(0)}}) =\pi_3\pi_2(\bbT_{v^{(0)}}) = \pi_3(\bbT_{w}) = \bbT_v.
\]
%
%
%
\end{exm}
\bibliography{references}{}
\bibliographystyle{acm}

\end{document}